\newtheorem{theorem}{Theorem}[section]
\newtheorem{lem}[theorem]{Lemma}
\newtheorem{definition}[theorem]{Definition}
\newtheorem{conjecture}[theorem]{Conjecture}
\numberwithin{equation}{section}
\newcommand{\bP}{\mathbb{P}}
\newcommand{\el}{\stackrel{d}{=}}
\newcommand{\bE}{\mathbb{E}}
\newcommand{\bone}{\mathbbm{1}}
\newcommand{\Var}{\mathrm{Var}}
\newcommand{\Cov}{\mathrm{Cov}}
\newcommand{\TT}{\mathcal{T}}
\newcommand{\LL}{\mathcal{L}}
\newcommand{\nn}{\nonumber}
\def\beq#1\eeq{\begin{equation}#1\end{equation}}
\def\le{\leq}
\def\de{\delta}
\def\De{\Delta}
\def\ge{\geq}
\def\sig{\sigma}
\def\wtilde{\widetilde}
\def\el{\stackrel{d}{=}}
\def\cgp{\stackrel{p}{\longrightarrow}}
\def\beqnn#1\eeqnn{\begin{equation*}#1\end{equation*}}
\def\beqn#1\eeqn{\begin{equation}#1\end{equation}}
\def\beqna#1\eeqna{\begin{eqnarray}#1\end{eqnarray}}
\newenvironment{romenumerate}[1][-10pt]{
\addtolength{\leftmargini}{#1}\begin{enumerate}
 }{\end{enumerate}}
\newcommand\set[1]{\ensuremath{\{#1\}}}
\newcommand\bigpar[1]{\bigl(#1\bigr)}
\newcommand\lrpar[1]{\left(#1\right)}
\newcommand\bigabs[1]{\bigl\lvert#1\bigr\rvert}
 \newcommand\gG{\Gamma}
 \newcommand\rhoqq{\rho^{1/2}}
 \newcommand\parentwn{\stackrel{\leftarrow}{w_n}}
\title[Strong Convergence of Infinite Color Balanced Urns]
        {Strong Convergence of Infinite Color Balanced Urns Under Uniform Ergodicity}
\author{Antar Bandyopadhyay} 
\address[Antar Bandyopadhyay]{Theoretical Statistics and Mathematics Unit \\
         Indian Statistical Institute, Delhi Centre \\ 
         7 S. J. S. Sansanwal Marg \\
         New Delhi 110016 \\
         INDIA}
\address{Theoretical Statistics and Mathematics Unit, 
         Indian Statistical Institute, Kolkata;
         203 B. T. Road, Kolkata 700108, INDIA}
\email{antar@isid.ac.in}
\author{Svante Janson}  
\address[Svante Janson]{Department of Mathematics, Uppsala University,
         Box 480, 751 06 Uppsala, Sweden\\}         
\email{svante@math.uu.se} 
\author{Debleena Thacker}  
\address[Debleena Thacker]{Department of Mathematics, Uppsala University,
         Box 480, 751 06 Uppsala, Sweden\\}         
\email{thackerdebleena@gmail.com} 
\date{\today}
\begin{document}

\begin{abstract}
We consider the generalization of the P\'olya urn scheme with possibly infinite many 
colors as introduced in \cite{Th-Thesis, BaTH2014, BaTh2016, BaTh2017}.
For countable many colors, we prove almost sure convergence of the urn configuration
under \emph{uniform ergodicity} assumption on the associated Markov chain. The
proof uses a stochastic coupling of the sequence of chosen colors with  
a \emph{branching Markov chain} on a weighted \emph{random recursive tree} as described in
\cite{BaTh2017, Sv_2018}. Using this coupling we estimate the covariance between any two selected colors. In particular, we reprove the limit theorem for the classical 
urn models with finitely many colors.  
\end{abstract}

\keywords{almost sure convergence, branching Markov chain, 
infinite color urn, random recursive tree, 
reinforcement processes, uniform ergodicity, urn models} 

\subjclass[2010]{Primary: 60F05, 60F10; Secondary: 60G50}

\maketitle         

\section{Introduction}
\label{Sec: Intro}
 
P\'olya urn schemes and its various generalizations have been a key element of study for random processes with reinforcements.  Starting from the seminal work of P\'olya \cite{Polya30}, 
various types of urn schemes with finitely many colors have been 
widely studied in literature, see \cite{Pe07} for an extensive survey of the known 
classical results and some of the modern works can be found in
\cite{Svante1, Svante2,
BaiHu05, FlDuPu06, maulik1, maulik2, DasMau11, ChKu13, ChHsYa14}.

The P\'olya urn models with colors indexed by a general Polish space was first introduced in \cite{BlackMac73}. But, unlike in the classical case, where the set of colors
is finite, models with infinite colors was not studied in details till very recently. 
A new generalization for \emph{balanced} urn schemes with infinitely many colors 
was again introduced in 
\cite{Th-Thesis} and subsequently in the papers \cite{BaTH2014, BaTh2016, BaTh2017}. 
These work have since then generated a lot of interest and such models are
now receiving considerable attention \cite{Sv_2018, Sv_2017, Mai_Mar_2017}. 
In this paper, we will consider the 
infinite color balanced urn model, where the color set is countably infinite.

\subsection{Model}
\label{Subsec: Model} 
In this work we will consider the same generalization of the P\'olya urn scheme 
with infinite many 
colors as defined in \cite{Th-Thesis, BaTH2014, BaTh2016, BaTh2017}. However, we will
focus on the special case where the set of 
colors, is \emph{countably infinite}, which will be denoted by $S$.
We follow similar framework and 
notations as in \cite{BaTH2014, BaTh2016, BaTh2017}. For the sake of completeness,
we provide here a brief description of the model.

Let $R$ be a $S \times S$ (infinite) matrix with non-negative entries, 
representing the \emph{replacement scheme}. We will assume that $R$ is
\emph{balanced}, that is, each row sum is equal and finite. In that case, it is
customary to take $R$ a \emph{stochastic matrix} (see \cite{BaTh2017} for details).

We will denote by $U_n := \left(U_{n,v}\right)_{v \in S} \in [0, \infty)^{S}$,
the random configuration of the urn at time $n \geq 0$. We will view it as an 
infinite vector (with non-negative entries) which is in $\ell_1 \equiv \ell_1\left(S\right)$, 
and thus can also be
viewed as a (random) finite measure on $S$.
Intuitively, we will define $U_n$,   
such that, if $Z_n$ be the randomly chosen color at the $\left(n+1\right)$-th draw,
then
the conditional distribution of $Z_n$ given the ``\emph{past}'', will 
satisfy,
for all $z \in S$,
\[
\bP\left( Z_n =z\,\Big\vert\, U_n, U_{n-1}, \cdots, U_0 \right) \propto U_n\left(z\right).
\]
Formally, starting with a non-random $U_0 \in \ell_1$, we define 
$\left(U_n\right)_{n \geq 0} \subseteq \ell_1$, recursively as follows
\begin{equation}
\label{Equ:Fundamental-Recurssion}
U_{n+1}  = U_n  + R_{Z_n}, 
\end{equation}
where $R_z$ denotes the $z$-th row of the matrix $R$, and
\begin{equation}
\bP\left( Z_n =z \,\Big\vert\, U_n, U_{n-1}, \cdots, U_0 \right) = \frac{U_{n,z}}{n+t},
\label{Equ:Conditional-Distribution-of-Zn}
\end{equation}
where $U_0$ is a $\ell_1$-vector with total mass denoted by $0 < t < \infty$, that is,
$\sum_{v \in S} U_{0,v} = t \in \left(0,\infty\right)$. 

Observe that, one can now associate with such an urn model a Markov chain $\left(X_n\right)_{n \geq 0}$ on the countable state space $S$, with transition matrix $R$ and initial distribution $U_0/t$.
Conversely, given any Markov chain $\left(X_n\right)_{n \geq 0}$,
on the countable state space $S$, with transition matrix $R$ and a vector $U_0\in \ell_1$,  one can associate a balanced urn model $\left(U_n\right)_{n \geq 0}$, satisfying
equations ~\eqref{Equ:Fundamental-Recurssion} and 
~\eqref{Equ:Conditional-Distribution-of-Zn}.
We will call such a Markov chain $\left(X_n\right)_{n \geq 0}$, as a Markov chain associated with the urn model $\left(U_n\right)_{n \geq 0}$.

It has been observed in \cite{BaTh2016, Mai_Mar_2017} that the asymptotic properties 
of the urn model so defined are 
determined by the asymptotic properties of the associated Markov chain. 
In fact, in \cite{BaTh2016, Mai_Mar_2017}, the authors have shown that 
the urn sequence $\left(U_n\right)_{n \geq 0}$ has same law as that of a
\emph{branching Markov chain} with transition matrix $R$, initial distribution $U_0/t$
and defined on the \emph{random recursive tree}. In Section \ref{Sec:Coupling}, we
provide the details of this representation.

\subsection{Main Result}
\label{SubSec:Result}
 In this paper, we consider the case when $R$ is irreducible, aperiodic and positive recurrent.
From
classical theory (see Section XV.7 of \cite{FellerVol1} for the details), it is well know that, in that case, the chain has
unique stationary distribution, say, $\pi$, satisfying the equation
\beq
\label{Eq:Stationary_distribution_R}
\pi R=\pi.
\eeq
Moreover, such a chain
 is \emph{ergodic}, that is, 
for any $u, v \in S$,  
\beqnn
\lim_{n \to \infty} R^{n}(u, v)= \pi_{v},
\eeqnn
where $R^n$ is the $n$-step transition matrix, which is nothing but the 
$n$ fold composition of $R$ with itself. Note that as $S$ is countable, $R^n$ is
just the $n$ fold multiplication of $R$.

In this work we will further assume that the chain is 
\emph{uniformly ergodic}. For the
sake of completeness, we provide the definition here.
(One often uses a version with summation over $v$ in \eqref{Eq:GeoErforR};
we need only the version below.)
\begin{definition}\label{Duerg}
A Markov chain with transition matrix $R$ on a countable
state space $S$ is called \emph{uniformly ergodic}, if
there exists positive constants, $0<  \rho < 1$ and $C > 0$, such that, 
for any time $n \geq 1$ and for any states $u, v \in S$,
\begin{equation}
\big{\rvert} R^n\left(u, v\right)-\pi_{v} \big{\lvert} \le C \rho^n.
\label{Eq:GeoErforR}
\end{equation} 
\end{definition}

We note here that if $S$ is finite then an irreducible and aperiodic chain
is necessarily uniformly ergodic (see Theorem 4.9 of \cite{LevPeEli2009}). However, when $S$ is infinite (even countable) there are ergodic chains which are not
uniformly ergodic (see e.g. \cite{Haggs2005}).

Our main result is as follows:
\begin{theorem}
\label{Thm:a.s.convg_irr} 
Consider an urn model $\left(U_n\right)_{n \geq 0}$ as defined by the equations 
~\eqref{Equ:Fundamental-Recurssion} and  ~\eqref{Equ:Conditional-Distribution-of-Zn},
with colors indexed by a countably infinite set $S$, a balanced 
replacement matrix $R$, and an initial configuration $U_0$. 
We assume that $R$ is a stochastic matrix which is 
irreducible, aperiodic, positive recurrent with stationary distribution $\pi$, 
and uniformly ergodic, that is, satisfying  \eqref{Eq:GeoErforR}.
\begin{romenumerate}[-16pt]
  
\item \label{convg_irr}
Then as 
$n \to \infty$,
\begin{equation}
\frac{U_n}{n+t}\longrightarrow \pi \text{ a.s.}, 
\label{Eq:a.s.convg_irr}
\end{equation} 
where the convergence is coordinate wise and also in $\ell_1$.
\item\label{local_times_color}
For any $v \in S$, let
$N_{n,v}:=\sum_{k=0}^n \bone_{\{Z_k=v\}}$,
denote the number of times the color $v$ is chosen upto time $n$. Then, as
$n \to \infty$
\beq\label{Eq:local_time_color_almost_sure}
\frac{N_{n,v}}{n+1} \longrightarrow \pi_v \text{ a.s.},
\eeq
where the convergence is coordinate wise and also in $\ell_1$.
\end{romenumerate}
\end{theorem}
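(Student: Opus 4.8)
The plan is to exploit the coupling described in Section~\ref{Sec:Coupling}, which represents $U_n$ in law as a branching Markov chain on a random recursive tree, and to reduce both parts of the theorem to an $L^2$ estimate on the selected colors. Write $\mu_n := U_n/(n+t)$, so that $\mu_n$ is a random probability measure on $S$, and note that by~\eqref{Equ:Fundamental-Recurssion} and~\eqref{Equ:Conditional-Distribution-of-Zn} one has $\bE\!\left[U_{n+1,z}\mid U_n,\dots,U_0\right] = U_{n,z} + \sum_{y\in S}\frac{U_{n,y}}{n+t}R(y,z) = U_{n,z}\frac{n+t+1}{n+t} + \bigl(\mu_n R - \mu_n\bigr)(z)\cdot(\text{correction})$; more usefully, iterating gives $\bE[\mu_n(z)] = \sum_{y\in S} (U_0/t)(y)\, \bar R_n(y,z)$ for a suitable Cesàro-type average of powers of $R$, and uniform ergodicity~\eqref{Eq:GeoErforR} shows $\bE[\mu_n(z)] \to \pi_z$ for every $z$, uniformly in $z$. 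So the first moment is under control; the content of the theorem is the almost sure statement.

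First I would establish the key covariance bound: for any two fixed colors $u,v\in S$ (in fact uniformly over $u,v$),
\begin{equation*}
\Cov\bigl(N_{n,u},N_{n,v}\bigr) = O\bigl(n\bigr), \qquad \Var\bigl(N_{n,v}\bigr) = O(n),
\end{equation*}
with constants depending only on $C,\rho$ from~\eqref{Eq:GeoErforR}. This is where the branching-Markov-chain-on-a-random-recursive-tree coupling does the real work: $N_{n,v}$ counts the number of the first $n$ vertices of the random recursive tree whose associated color (obtained by running the Markov chain $R$ down from the root) equals $v$. The covariance of two such counts decomposes according to the most recent common ancestor of a pair of vertices $i,j$ in the tree; conditionally on that ancestor being at depth comparable to $\log$ of the labels and carrying some color $w$, the two colors at $i$ and $j$ are obtained by two independent further runs of $R$, each of which has distribution within $C\rho^{(\cdot)}$ of $\pi$. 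The geometric mixing~\eqref{Eq:GeoErforR} makes the ``correlated'' contribution (common ancestor close to $i$ or $j$) summable, leaving a bound linear in $n$. I would carry this out by writing $N_{n,v} = \sum_{i=1}^{n}\bone_{\{\text{color}(i)=v\}}$, expanding $\bE[N_{n,u}N_{n,v}]$ as a double sum over ordered pairs, and using the tree structure together with~\eqref{Eq:GeoErforR} to bound each term.

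Given the $O(n)$ variance bound, part~\ref{local_times_color} follows by a standard subsequence-plus-monotonicity argument: along $n_k = k^2$, Chebyshev and Borel--Cantelli give $N_{n_k,v}/(n_k+1)\to\pi_v$ a.s.\ for each fixed $v$ (using also $\bE[N_{n,v}/(n+1)]\to\pi_v$), and since $n\mapsto N_{n,v}$ is nondecreasing and consecutive $n_k$ are within a factor $1+o(1)$, the full sequence converges a.s.; a diagonal argument over the countable set $S$ handles all $v$ simultaneously. For part~\ref{convg_irr}, observe $U_{n,v} = U_{0,v} + \sum_{k=0}^{n-1} R(Z_k,v)$, hence $\mu_n(v) = \frac{1}{n+t}\bigl(U_{0,v} + \sum_{w\in S} N_{n-1,w} R(w,v)\bigr)$; combining the a.s.\ limits $N_{n,w}/(n+1)\to\pi_w$ with $\sum_w \pi_w R(w,v) = \pi_v$ from~\eqref{Eq:Stationary_distribution_R} gives coordinatewise convergence $\mu_n(v)\to\pi_v$ a.s. To upgrade both convergences to $\ell_1$: since all the $\mu_n$ and the limit $\pi$ are probability measures on $S$ and $\mu_n\to\pi$ coordinatewise a.s., Scheffé's lemma (equivalently, the fact that coordinatewise convergence of probability vectors to a probability vector forces total-variation convergence) yields $\|\mu_n-\pi\|_1\to 0$ a.s.; the same argument applies to $N_{n,\cdot}/(n+1)$.

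The main obstacle I expect is the covariance estimate, specifically getting the dependence on the common ancestor under control uniformly in the pair of colors $u,v$: one must quantify, via~\eqref{Eq:GeoErforR}, how the correlation between $\bone_{\{\text{color}(i)=u\}}$ and $\bone_{\{\text{color}(j)=v\}}$ decays in the tree-distance between $i,j$ and their common ancestor, and then show that summing these contributions over all $\binom{n}{2}$ pairs still gives only $O(n)$ rather than $O(n\log n)$ or worse. The random-recursive-tree geometry (the fact that the common ancestor of vertices $i<j$ is typically at a label that is roughly uniform on $\{1,\dots,i\}$, so depths are logarithmic) combined with the exponential bound $C\rho^{n}$ is exactly what is needed, but making the bookkeeping clean — and uniform in $u,v$, which is what ultimately feeds the diagonal argument over all of $S$ — is the delicate part.
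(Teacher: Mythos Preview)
Your overall architecture---coupling with the branching Markov chain on the RRT, bounding covariances via the tree distance to the least common ancestor, deducing an a.s.\ law of large numbers for $N_{n,v}$, and then passing to $U_n$ via $U_{n,v}=U_{0,v}+\sum_w N_{n-1,w}R(w,v)$ together with Scheff\'e---matches the paper exactly. The gap is in the variance estimate you aim for.

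The claim $\Var(N_{n,v})=O(n)$ is false in general. Already for finite $S$, if the replacement matrix has a second eigenvalue $\lambda_2>\tfrac12$ (the ``large'' regime of classical urn theory), then $\Var(N_{n,v})\asymp n^{2\lambda_2}$, and any $\rho$ in \eqref{Eq:GeoErforR} must satisfy $\rho\ge\lambda_2$. So for $\rho$ close to $1$ the variance is of order $n^{2-\varepsilon}$, not $O(n)$ or $O(n\log n)$. Your identification of the ``delicate part'' is therefore mis-calibrated: the double sum over pairs does \emph{not} collapse to $O(n)$. The paper does not try for $O(n)$; it proves (Lemma~\ref{Lem:Cov_2_colors}) that $\Cov(\bone_{\{W_u=v\}},\bone_{\{W_w=v\}}\mid\TT_n)\le C\rho^{d(u,w)/2}$, and then (Lemma~\ref{LA}) computes recursively that $B_n(r):=\bE\sum_{u,w\in\TT'_n}r^{\,d(u,w)}$ satisfies $B_n(r)\le Cn^{2r}$ for $r>\tfrac12$, giving only $J_{n,v}=\Var(N_{n,v})\le C\,n^{2\sqrt{\rho}}$.

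This weaker bound breaks your $n_k=k^2$ subsequence argument when $\sqrt{\rho}>\tfrac34$, since $\sum_k n_k^{2\sqrt{\rho}-2}=\sum_k k^{4\sqrt{\rho}-4}$ diverges. The paper sidesteps this by invoking Lyons' SLLN for correlated variables, which needs only $\sum_n J_{n,v}/n^3<\infty$, and this holds because $2\sqrt{\rho}<2$. Your approach can be repaired just as easily: take $n_k=k^p$ with $p>1/(2-2\sqrt{\rho})$; then Chebyshev plus Borel--Cantelli gives a.s.\ convergence along $(n_k)$, and since $n_{k+1}/n_k\to1$ the monotonicity interpolation still works. But you must first obtain a variance bound that is $O(n^\alpha)$ for some explicit $\alpha<2$, and that requires a quantitative estimate on $\bE\sum_{u,w}\rho^{d(u,w)/2}$ over the RRT---this is the content of Lemma~\ref{LA}, and it is not a step you can skip.
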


\subsection{Background and Motivation}
\label{SubSec:Back}
It is known (see, for example, Theorems 3.3(a) and 3.4(a) of \cite{BaTh2016}) 
that under our set up, as $n \to \infty$, 
\beqn\label{Eq:Irr_aperiodic_prob_convgs_urn}
\frac{U_n}{n+t} \cgp \pi,
\eeqn 
and also for any $v \in S$,
\beqn 
\label{Eq:Irr_aperiodic_expectation_convgs_urn}
\bP\left(Z_n=v\right) = \frac{\bE[U_{n,v}]}{n+t} \longrightarrow \pi_v.
\eeqn
Recall, $Z_n$ denotes the randomly chosen color at the $\left(n+1\right)$-th draw,
from the urn, when its (random) configuration is $U_n$. 
Our result strengthens this result to strong convergence. However, we would
like to point-out that the results in \cite{BaTh2016} (Theorems 3.3(a) and 3.4(a)),
only needs assumption of ergodicity for the associated Markov chain, while our main result in this work needs more stronger assumption of
uniform ergodicity of the associated Markov chain. As discussed above the two assumptions are identical when
$S$ is finite. It is worthwhile to note here that for $S$ finite our result is essentially 
the classical result for Freedman-P\'{olya}-Eggenberger  type urn models 
\cite{Gouet, BaiHu05, Svante1, AthKar68}. The classical
results mainly use three types of techniques, namely, the martingale techniques \cite{Gouet, maulik1, maulik2, DasMau11}; stochastic approximations \cite{LaPa2013} 
and embedding into continuous time pure birth processes \cite{AthKar68, Svante1, Svante2, BaiHu05}. 
Typically, the analysis of a finite color urn is heavily dependent on the 
\emph{Perron-Frobenius theory} \cite{Sene06} of matrices with positive entries
and \emph{Jordan Decomposition}
of finite dimensional matrices
\cite{AthKar68, Gouet, Svante1, Svante2, BaiHu05, maulik1, DasMau11}. 
Unfortunately, such techniques are unavailable when $S$ is infinite, even 
when countable. Our method bypasses the use of such techniques and instead uses
the newer approach developed in  \cite{BaTh2016, Mai_Mar_2017}. 
Our extra assumption (uniform
ergodicity) is needed only when $S$ infinite. Thus the result stated above re-proves
the classical result for the finite color urn model using the new technique. The result
essentially completes the work developed in \cite{BaTh2016, Mai_Mar_2017} for 
the case when $S$ is countable. We would like to note here that similar results
for a null recurrent case (when the chain is a random walk) has been 
derived in \cite{Mai_Mar_2017, Sv_2018}.

\subsection{Discussion on the assumption of uniform ergodicity}
\label{SubSec:Remarks}
As discussed above, when $S$ is finite the assumption of 
uniform ergodicity is equivalent to the assumption of ergodicity of the 
associate Markov chain \cite{LevPeEli2009}. In particular, it holds for 
irreducible and aperiodic chain.
However, when $S$ is infinite it is indeed a much
stronger assumption.  Necessary and sufficient condition under which 
a chain is uniformly ergodic can be found in \cite{Me_Tw_2009}. In particular,
an irreducible and aperiodic chain on a countable state space is uniformly ergodic, if and only if, the so called \emph{Doeblin's condition} is satisfied
(see Section 16.2 of \cite{Me_Tw_2009}). This condition is satisfied by 
many Markov chains on countable infinite state space, but it is indeed restrictive. 
We need this assumption in the proof we provide in the Section ~\ref{Sec:Proofs}. 
However, we do feel that this condition is not necessary in general. We, in fact,
make the following conjecture:
\begin{conjecture}
\label{Conj:Non-uniform-ergodicity}
Consider an urn model $\left(U_n\right)_{n \geq 0}$ as defined by the equations 
~\eqref{Equ:Fundamental-Recurssion} and  ~\eqref{Equ:Conditional-Distribution-of-Zn},
with colors indexed by a countably infinite set $S$, a balanced 
replacement matrix $R$, and an initial configuration $U_0$. 
Assume that $R$ is a stochastic matrix which is 
irreducible, aperiodic, positive recurrent with stationary distribution $\pi$. Then,  
the convergence in ~\eqref{Eq:a.s.convg_irr}
and \eqref{Eq:local_time_color_almost_sure}
holds a.s.\ and also in $\ell_1$. 
\end{conjecture}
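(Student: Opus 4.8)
The natural route is to push the method of Theorem~\ref{Thm:a.s.convg_irr} as far as it will go. Recall its structure. Via the coupling of Section~\ref{Sec:Coupling} --- the branching Markov chain with transition matrix $R$ and initial law $U_0/t$ run on a weighted random recursive tree --- the chosen colours $(Z_k)$ are realised jointly with the tree, and it suffices to prove the analogue of part~\ref{local_times_color}: the analogue of part~\ref{convg_irr} then follows from the identity $U_{n,v}=U_{0,v}+\sum_{u\in S}R(u,v)N_{n-1,u}$, the conservation law $\sum_v N_{n,v}=n+1$, the stochasticity of $R$, $\pi R=\pi$, and Scheff\'{}e's lemma applied coordinate by coordinate. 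For the occupation frequencies, since $\bE[N_{n,v}]/(n+1)\to\pi_v$ by \eqref{Eq:Irr_aperiodic_expectation_convgs_urn} and Ces\`{a}ro averaging, it is enough to establish a variance bound of the form $\sum_\ell\Var(N_{n_\ell,v})/n_\ell^2<\infty$ along a geometric subsequence $n_\ell=\lceil c^\ell\rceil$; Chebyshev and Borel--Cantelli then give convergence along each such subsequence, and monotonicity of $n\mapsto N_{n,v}$ together with $c\downarrow1$ upgrades this to \eqref{Eq:local_time_color_almost_sure} for all $n$. Here $\Var(N_{n,v})=\sum_{j,k}\Cov(\bone_{\{Z_j=v\}},\bone_{\{Z_k=v\}})$, and for $j<k$ one conditions on the tree and on the colour $C_a$ of the most recent common ancestor $a$ of the two vertices carrying $Z_j$ and $Z_k$; conditional independence of the two legs from $a$ yields
\[
\bigl|\Cov(\bone_{\{Z_j=v\}},\bone_{\{Z_k=v\}})\bigr|\;\le\;\bE\bigl[\delta^{(1)}_{jk}\delta^{(2)}_{jk}\bigr]+\bE\bigl[\delta^{(1)}_{jk}\bigr]\bE\bigl[\delta^{(2)}_{jk}\bigr],\qquad \delta^{(i)}_{jk}:=\bigl|R^{L^{(i)}_{jk}}(C_a,v)-\pi_v\bigr|\le1,
\]
where $L^{(1)}_{jk},L^{(2)}_{jk}$ are the graph distances from $a$ to the two vertices, of order $\log n$ for typical pairs $j,k$.

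In Theorem~\ref{Thm:a.s.convg_irr} the hypothesis \eqref{Eq:GeoErforR} is used essentially at one point: it bounds $\delta^{(i)}_{jk}\le C\rho^{L^{(i)}_{jk}}$ \emph{uniformly in the ancestor colour}, so that the geometric gain from the $\asymp\log n$ legs beats the $\asymp n^2$ pairs and produces $\Var(N_{n,v})=O(n^{2-\eta})$ for some $\eta=\eta(\rho)>0$. To dispense with \eqref{Eq:GeoErforR} one must replace this uniform bound by what mere ergodicity, positive recurrence and the stochasticity of $R$ still supply: for each fixed $u\in S$, $R^\ell(u,\cdot)\to\pi$ in total variation (Scheff\'{}e, from the coordinatewise convergence of probability vectors), hence $\sup_{u\in F}\|R^\ell(u,\cdot)-\pi\|_{\mathrm{TV}}\to0$ for every \emph{finite} $F\subseteq S$, while $\pi(F^c)$ can be made arbitrarily small with $F$ finite.

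The plan is then the following. Given $\varepsilon>0$, pick a finite $F=F(\varepsilon)$ and a cut-off $L=L(\varepsilon)$ with $|R^\ell(u,v)-\pi_v|<\varepsilon$ for all $u\in F$ and $\ell\ge L$, and with $\pi(F^c)<\varepsilon$; then split each covariance according to the ``good'' event $\{C_a\in F\}\cap\{L^{(1)}_{jk}\wedge L^{(2)}_{jk}\ge L\}$ and its complement. On the good event the per-pair contribution is $O(\varepsilon^2)$. Off it one bounds the covariance trivially by $1$ and must show the excluded pairs are sparse: the ``short-leg'' pairs (including the ancestor--descendant ones) are few, of order $O(nL\log n)$ plus the $O(n\log^2 n)$ already handled in the uniformly ergodic case, while $\bP(C_a\notin F)$ ought to be small because the colour of a common ancestor is the colour of a near-root vertex whose law is pushed towards $\pi$ (which charges $F$ heavily) as soon as the ancestor is not extremely shallow, and shallow common ancestors are themselves rare among pairs. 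Carried through with $\varepsilon=\varepsilon_n\to0$ slowly enough, this would give $\Var(N_{n,v})=o(n^2)$ at a rate summable along $(n_\ell)$, and one then lets $\varepsilon_n\downarrow0$.

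The obstruction --- and, I expect, the reason the statement is still only a conjecture --- lies precisely in ``with $\varepsilon_n\to0$ slowly enough''. For summability along $(n_\ell)$ one needs $\varepsilon_n$ to decay at least polylogarithmically, and for the good event to remain typical the cut-off $L(\varepsilon_n)$ must then stay below a constant times $\log n$, the typical leg length; this forces the $\varepsilon$-mixing time $L(\varepsilon)$ of $R$ towards $\pi$ --- uniform over the colours a common ancestor can carry with non-negligible probability --- to grow only poly-logarithmically in $1/\varepsilon$, which is essentially uniform ergodicity back again. Mere ergodicity guarantees $L(\varepsilon)<\infty$ for each fixed $\varepsilon$ but no control on its growth, and colours far out in the tail of $\pi$ may approach $\pi$ arbitrarily slowly; summing their individually negligible contributions over $\asymp n^2$ pairs is out of reach with present tools. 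A softer reduction meets the same wall: $M_{n,v}:=U_{n,v}/(n+t)\in[0,1]$ satisfies $\bE[M_{n+1,v}\mid\mathcal{F}_n]-M_{n,v}=\bigl((M_nR)(v)-M_{n,v}\bigr)/(n+1+t)$, so by the quasi-martingale convergence theorem it would suffice to show $\sum_n n^{-1}\bE\bigl|(M_nR)(v)-M_{n,v}\bigr|<\infty$; the summand tends to $0$ by the known convergence \eqref{Eq:Irr_aperiodic_prob_convgs_urn}, but extracting a rate $o(1/\log n)$ for it once more requires exactly the colour-uniform speed of convergence that uniform ergodicity provides and that positive recurrence alone does not. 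Thus the plan is essentially forced; the hard step is a colour-uniform quantitative mixing estimate not available under the weaker hypothesis.
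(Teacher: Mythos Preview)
The statement you were asked to prove is a \emph{conjecture} in the paper, not a theorem: the paper offers no proof whatsoever, and explicitly says (Section~\ref{SubSec:Remarks}) that uniform ergodicity is needed in the proof given, while the authors ``feel that this condition is not necessary in general.'' There is therefore no paper proof to compare against.

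Your proposal is not a proof either --- and you say so yourself. What you have written is a careful diagnosis of \emph{why} the method of Theorem~\ref{Thm:a.s.convg_irr} does not extend: the covariance bound of Lemma~\ref{Lem:Cov_2_colors} uses \eqref{Eq:GeoErforR} to get a decay rate uniform in the ancestor colour, and without that uniformity the truncation argument you sketch (finite $F$, cut-off $L(\varepsilon)$) requires the $\varepsilon$-mixing time to grow at most polylogarithmically in $1/\varepsilon$, which is essentially uniform ergodicity again. Your quasi-martingale reduction meets the same wall. This analysis is sound and goes well beyond the paper's one-sentence remark on the role of the hypothesis; it is a genuine contribution to understanding the obstruction. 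But it is an explanation of why the conjecture is open, not a proof of it, and you should present it as such rather than as a ``proof proposal.''
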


\subsection{Outline}
\label{SubSec:Outline}
In the following section we provide some details about the representation of a
balanced urn in terms of a branching Markov chain on a weighted random recursive tree, which
is our main tool to prove Theorem ~\ref{Thm:a.s.convg_irr}. 
Section ~\ref{Sec:Proofs}  provides the proof of Theorem ~\ref{Thm:a.s.convg_irr}. 
In Section ~\ref{Sec:Appl}, we discuss a non-trivial application of our main result.

\section{Coupling of branching Markov chains and urn models}\label{Sec:Coupling}
It is known from \cite{Th-Thesis, BaTh2016, Mai_Mar_2017, Sv_2018} that the law for the entire sequence of randomly selected colors $\left(Z_n\right)_{n \ge 0}$ can be represented in terms of a \textit{branching Markov chain} on 
a \emph{random recursive trees}. For the sake of completeness, we will briefly discuss this representation here. We will later use this representation to prove the main result of the paper.

\subsection{Weighted random recursive tree}\label{Sec: WRRT}
Random recursive trees (RRT) are well studied in literature, see Ch.6 of
\cite{Drm_2009}. The weighted version for RRT has been introduced and
defined in \cite{Sv_2018}. For $n \ge -1$, let $\TT_n$ be the random
recursive tree on $n+2$ vertices, with $o$ as the root, and the other
vertices labeled as $\{w_0,w_1,\ldots, w_n\}$, where the increasing
subscripts of the vertices indicate the order in which they are
attached. The root is given some initial weight $t>0$. Every other node has
weight $1$. Initially, we start with $\TT_{-1}$ which consists only of the
root, denoted by $o$. Now we construct recursively the sequence of trees
$\left(\TT_n\right)_{n \ge -1}$, where the parent of the incoming node in
$\TT_n$ is chosen proportional to its weight, that is, the parent is the
root $o$, with probability $t/(n+t+1)$, and any other vertex with
probability $1/(n+t+1)$. Define the infinite random recursive tree as  
\beq\label{Eq:Def_Inf_RRT}
\TT:= \bigcup_{n \ge -1} \TT_n.
\eeq
\subsection{Branching Markov chain on RRT} \label{BMC}
The definition for \textit{branching Markov chain on the random recursive tree}, which we abbreviate as BMC on RRT, as discussed in the context of this paper is available in details in \cite{BaTh2016, Mai_Mar_2017}. 
To facilitate convenient reading of this paper, we discuss briefly the BMC on RRT as available in \cite{BaTh2016}.

Recall that the set of colors are indexed by a set $S$. Let $\Delta\not\in S$ be a symbol. We say a stochastic process $\left(W_n\right)_{n \ge -1}$ with state space 
$S\cup \Delta$ is a \textit{branching Markov chain on} $\TT$, starting at the root $o$ and at a position $W_{-1}=\Delta$, if for any $n \geq 0$ and for any $z \in S$
\begin{align}
  \label{Eq:transition_for_BRW_RRT}
\bP \left(W_n = z  \mid W_{n-1}, W_{n-2},  \ldots, W_{-1}; \TT_n \right)=
\begin{cases} 
U_0(z)/t, & \text{ if }\parentwn = o,\\
R\left(W_{j}, z\right) & \text{ if }\parentwn=w_j,
\end{cases}                                     
\end{align}
where $\parentwn$ is the parent of the vertex $w_n$
in $\TT_n$. Note that we here denote the vertices of $\TT_n$ as
$\left\{o,w_0,w_1 \cdots, w_n \right\}$.

\subsection{Representation Theorems}
The coupling of $\left(Z_n\right)_{n \geq 0}$ and $\left(W_n\right)_{n \geq 0}$ 
is available in details in \cite{BaTh2016, Mai_Mar_2017}. Here, we follow same notations as in \cite{BaTh2016}. The following representation is available in the Theorem 2.1 in \cite{BaTh2016}.
\beqn\label{Eq:Grand_representation}
\left(Z_n\right)_{n \ge 0}\el \left(W_n\right)_{n \ge 0}.
\eeqn

\section{Proof of the Main Results} 
\label{Sec:Proofs}

Recall that $\TT_n$ denotes the weighted
RRT with $n+2$ vertices; for convenience we
use $\TT_n$ also to denote its vertex set $\set{o,w_0,w_1,\ldots, w_n}$.
Let $\TT'_n:=\TT_n\setminus\set{o}=\set{w_0,w_1,\ldots, w_n}$,
the set of $n+1$ vertices excluding the root.
Note that the RRT $\TT_n$ is random, but the vertex set is non-random.

For $u,w\in\TT_n$, let $d(u,w)$ denote the graph distance between $u$ and
$w$. 
In particular, $d(o,u)$ is the depth of $u$, which we also denote by $d(u)$.

We begin by proving the following lemma,
where $\rho$ is as in Definition \ref{Duerg}.
\begin{lem}
\label{Lem:Cov_2_colors}
Let $\LL(u,w)$ denote the least common ancestor for the vertices $u,w$ in
the
random recursive tree (RRT).
Given the RRT $\TT_n$,
we have for some suitable constant $C>0$,
\beq\label{Eq:Cov_2_colors}
\Cov\left(W_u=v, W_w=v \mid \TT_n \right) \le C \rho^{\max(d(u, \LL(u,w)), d(w,\LL(u,w))} \le C \rho^{d(u,w)/2}.
\eeq 
\end{lem}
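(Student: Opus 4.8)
The plan is to exploit the spatial (branching) Markov structure of the branching Markov chain on the weighted RRT. Write $a:=\LL(u,w)$, $d_1:=d(u,a)$ and $d_2:=d(w,a)$. Conditioned on the color $W_a$ at the common ancestor $a$, the vertices $u$ and $w$ lie in disjoint parts of $\TT_n$, so their colors are conditionally independent with laws $R^{d_1}(W_a,\cdot)$ and $R^{d_2}(W_a,\cdot)$; the covariance in \eqref{Eq:Cov_2_colors} then collapses to the covariance of two deterministic functions on $S$ which uniform ergodicity forces to be uniformly within $C\rho^{d_i}$ of the constant $\pi_v$, and the estimate drops out.

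First I would reduce the geometry and clear away degenerate cases. Both sides of \eqref{Eq:Cov_2_colors} are symmetric in $u$ and $w$, so we may assume $d_1\le d_2$; then $\max(d_1,d_2)=d_2$ and $d(u,w)=d_1+d_2\le 2d_2$, so, as $0<\rho<1$, the second inequality $\rho^{d_2}\le\rho^{d(u,w)/2}$ holds and it suffices to bound $\lrabs{\Cov\left(\bone_{\{W_u=v\}},\bone_{\{W_w=v\}}\mid\TT_n\right)}$ by $C\rho^{d_2}$. If $u=o$ or $w=o$, then $W_u=\Delta$ or $W_w=\Delta$ almost surely and the covariance is $0$; so assume $u\ne o\ne w$. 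If $a=o$, the two distinct children of the root lying on the geodesics to $u$ and to $w$ receive i.i.d.\ colors from $U_0/t$, and, by the recursive definition \eqref{Eq:transition_for_BRW_RRT}, the subtrees of $\TT_n$ hanging below them evolve independently of one another; hence $W_u$ and $W_w$ are conditionally independent given $\TT_n$ and the covariance vanishes. So assume also $a\ne o$, whence $W_a\in S$ almost surely.

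Next I would record the branching Markov property. The geodesic from $a$ down to $u$ avoids the root, so by \eqref{Eq:transition_for_BRW_RRT} each of its $d_1$ edges contributes a factor $R$; hence, conditionally on $\TT_n$ and $W_a$, $\bP(W_u=v\mid\TT_n,W_a=s)=R^{d_1}(s,v)$ (with $R^0$ the identity when $u=a$), and similarly $\bP(W_w=v\mid\TT_n,W_a=s)=R^{d_2}(s,v)$. Since the two geodesics meet only at $a$, the branching structure of the chain makes $W_u$ and $W_w$ conditionally independent given $(\TT_n,W_a)$. Writing $\mu(s):=\bP(W_a=s\mid\TT_n)$ for the ($\TT_n$-measurable) conditional law of $W_a$ on $S$ and $f_i(s):=R^{d_i}(s,v)$, conditioning on $W_a$ then yields
\begin{align*}
&\Cov\left(\bone_{\{W_u=v\}},\bone_{\{W_w=v\}}\mid\TT_n\right)\\
&\qquad=\sum_s\mu(s)f_1(s)f_2(s)-\Bigpar{\sum_s\mu(s)f_1(s)}\Bigpar{\sum_s\mu(s)f_2(s)}=\Cov_\mu\bigpar{f_1,f_2},
\end{align*}
the covariance of $f_1$ and $f_2$ under the probability measure $\mu$ on $S$.

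Finally I would invoke uniform ergodicity. If $d_2\ge1$, then \eqref{Eq:GeoErforR} gives $\sup_s\lrabs{f_2(s)-\pi_v}\le C\rho^{d_2}$; since $f_2-\pi_v$ differs from $f_2$ by a constant and $f_1-\bE_\mu f_1$ has zero $\mu$-mean,
\begin{align*}
\lrabs{\Cov_\mu\bigpar{f_1,f_2}}=\lrabs{\bE_\mu\bigsqpar{(f_1-\bE_\mu f_1)(f_2-\pi_v)}}&\le \bE_\mu\lrabs{f_1-\bE_\mu f_1}\cdot\sup_s\lrabs{f_2(s)-\pi_v}\\
&\le C\rho^{d_2},
\end{align*}
because $f_1\in[0,1]$ forces $\lrabs{f_1-\bE_\mu f_1}\le1$. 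The remaining case $d_2=0$ forces $d_1=0$, i.e.\ $u=w=a$, where the covariance equals $\Var(\bone_{\{W_u=v\}}\mid\TT_n)\le1\le C$ after possibly enlarging $C$; since $d_2=\max(d_1,d_2)$ this is \eqref{Eq:Cov_2_colors}. The only genuinely non-routine step is the branching/spatial Markov claim — that conditioning on the ancestor's color decouples the two branches and produces the $R^{d_i}$ transition laws — but this is immediate from the recursive construction \eqref{Eq:transition_for_BRW_RRT} of the branching Markov chain on $\TT_n$; everything else is the elementary covariance estimate above together with bookkeeping for the cases $u=o$, $w=o$, $u=a$ and $a=o$.
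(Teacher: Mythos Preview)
Your proof is correct and follows essentially the same route as the paper: condition on the color $W_a$ at the least common ancestor, use the branching Markov property to factor the joint law as $R^{d_1}(s,v)R^{d_2}(s,v)$, and then subtract $\pi_v$ and invoke uniform ergodicity to get the $C\rho^{d_2}$ bound. You are somewhat more careful than the paper about the degenerate cases ($u=w$, $a=o$, $u$ or $w$ equal to $o$), but the core argument is identical.
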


\begin{proof}
Let us denote by $\bP_n$ the conditional probability given the RRT $\TT_n$. By definition, 
\[
\Cov\left(W_u=v, W_w=v \mid \TT_n\right)= \bP_n \left(W_u=v,W_w=v \right)- \bP_n \left(W_u=v\right)\bP_n \left(W_w=v\right).
\]
With $\LL(u,w)$ denoting the least common ancestor between $u$ and $w$, it is easy to see that 
\beqnn
\bP_n \left(W_u=v,W_w=v \right)= \sum_{s \in S} \bP_n \left(W_{\LL(u,w)}=s\right)R^{d(u, \LL(u,w))}(s,v) R^{d(w, \LL(u,w))}(s,v).
\eeqnn

Thus, 
\begin{align}
  \nn &  \Cov\left(W_u=v, W_w=v \mid \TT_n\right)\\
\nn &=\sum_{s \in S}\bP_n \left(W_{\LL(u,w)}=s\right)R^{d(u, \LL(u,w))}(s,v) R^{d(w, \LL(u,w))}(s,v)\\
\nn &\quad-\sum_{s,s' \in S}\bP_n \left(W_{\LL(u,w)}=s\right)\bP_n \left(W_{\LL(u,w)}=s'\right)R^{d(u, \LL(u,w))}(s,v)R^{d(w, \LL(u,w))}(s',v)\\
\nn &=\sum_{s \in S}\bP_n \left(W_{\LL(u,w)}=s\right)R^{d(u, \LL(u,w))}(s,v)\Bigl[R^{d(w, \LL(u,w))}(s,v)\\
\nn& \quad -\sum_{s' \in S}\bP_n \left(W_{\LL(u,w)}=s'\right)R^{d(w, \LL(u,w))}(s',v)\Bigr]\\
\nn &=\sum_{s \in S}\bP_n \left(W_{\LL(u,w)}=s\right)R^{d(u, \LL(u,w))}(s,v)\Bigl[(R^{d(w, \LL(u,w))}(s,v)-\pi_v)\\
\nn& \quad -\sum_{s' \in S}\bP_n \left(W_{\LL(u,w)}=s'\right)(R^{d(w, \LL(u,w))}(s',v)-\pi_v)\Bigr]
.\end{align}
The last equality is obtained by adding and subtracting $\pi_v$ inside the final square bracket.
Recall that we have assumed uniform ergodicity for the Markov chain, so for both $s,s'$ we have 
$|R^{d(w, \LL(u,w))}(s,v)-\pi_v| < C\rho^{d(w, \LL(u,w))}$, which implies that 
\beq
|\Cov\left(W_u=v, W_w=v \mid \TT_n\right)| \le 2C \rho^{d(w, \LL(u,w))}. 
\eeq
The  first inequality in \eqref{Eq:Cov_2_colors} follows by symmetry.
The second inequality 
is obvious as $0<\rho<1$.
\end{proof}

\begin{lem}\label{LA}
  Fix $r$ with $0< r<1$ and define
  \begin{align}\label{laA=}
    A_n=A_n(r)&:=
    \bE \sum_{u \in \TT'_{n}}r^{d(u)},
\\    B_n=B_n(r)&:=
    \bE \sum_{\substack{u,w \in \TT'_{n}}}r^{d(u,w)}. \label{laB=}
  \end{align}
  Then, for some constant $C$ (possibly depending on $r$ and $t$) and all
  $n\ge1$, 
  \begin{align}
    A_n &\le C n^r,\label{laA}\\
    B_n &\le
    \begin{cases}
      C n^{2r}, & \frac12<r<1,\\
            C n\log (n+1), & r=\frac12,\\
            C n, & 0<r<\frac12.\\
          \end{cases}
\label{laB}  \end{align}
\end{lem}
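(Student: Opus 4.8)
The plan is to turn each of $A_n$ and $B_n$ into a linear recursion in $n$ by conditioning on the tree $\TT_{n-1}$ and on where the new leaf $w_n$ is attached, and then to read off the stated bounds from elementary estimates of the resulting products. Throughout, write $a_n:=\sum_{u\in\TT'_n}r^{d(u)}$ and $b_n:=\sum_{u,w\in\TT'_n}r^{d(u,w)}$, so that $A_n=\bE a_n$ and $B_n=\bE b_n$, and recall that $\TT_n$ arises from $\TT_{n-1}$ by attaching $w_n$ to a vertex $p\in\TT_{n-1}$, where $p=o$ with probability proportional to $t$ and $p=w_j$ ($0\le j\le n-1$) with probability proportional to $1$, the normalising constant being the total weight $t+n$ of $\TT_{n-1}$.

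First I would derive the recursions. Adding a leaf changes neither the depths of the old vertices nor the distances among them, and $d(w_n)=d(p)+1$, $d(u,w_n)=d(u,p)+1$ for $u\in\TT'_{n-1}$, while $d(w_n,w_n)=0$; hence, given $\TT_{n-1}$ and $p$, we have $a_n=a_{n-1}+r^{d(p)+1}$ and $b_n=b_{n-1}+2r\sum_{u\in\TT'_{n-1}}r^{d(u,p)}+1$. Averaging over $p$ given $\TT_{n-1}$ and using $d(o)=0$ (so that $\sum_u r^{d(u,o)}=a_{n-1}$) together with the identity $\sum_{j=0}^{n-1}\sum_{u\in\TT'_{n-1}}r^{d(u,w_j)}=\sum_{u,v\in\TT'_{n-1}}r^{d(u,v)}=b_{n-1}$, one gets $\bE[a_n\mid\TT_{n-1}]=(1+\frac{r}{t+n})a_{n-1}+\frac{rt}{t+n}$ and $\bE[b_n\mid\TT_{n-1}]=(1+\frac{2r}{t+n})b_{n-1}+\frac{2rt}{t+n}a_{n-1}+1$. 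Taking expectations,
\[
A_n=\bigl(1+\tfrac{r}{t+n}\bigr)A_{n-1}+\tfrac{rt}{t+n},\qquad
B_n=\bigl(1+\tfrac{2r}{t+n}\bigr)B_{n-1}+\tfrac{2rt}{t+n}A_{n-1}+1,
\]
with $A_{-1}=B_{-1}=0$.

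Next I would solve these. Iterating the first recursion, $A_n=\sum_{k=0}^{n}\frac{rt}{t+k}\prod_{j=k+1}^{n}(1+\frac{r}{t+j})$; writing the product as $\Gamma(t+n+1+r)\Gamma(t+k+1)/(\Gamma(t+n+1)\Gamma(t+k+1+r))$ and using that $\Gamma(x+r)/\Gamma(x)$ lies between positive constant multiples of $x^r$ for $x\ge1$ (standard Gamma-function estimates, e.g.\ Wendel's inequality), the product is $\le C\bigl((t+n)/(t+k)\bigr)^{r}$, so $A_n\le C(t+n)^r\sum_{k\ge0}(t+k)^{-1-r}$, and the series converges since $r>0$. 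This gives $A_n\le Cn^r$ for $n\ge1$, i.e.\ \eqref{laA}. (One can instead verify $A_n\le C(t+n)^r$ directly by induction on the recursion.) For $B_n$, the bound just obtained makes the inhomogeneous term $\frac{2rt}{t+n}A_{n-1}+1$ bounded by a constant $C=C(r,t)$ — this is the only place $r<1$ is used. Iterating the second recursion in exactly the same way then gives $B_n\le C\sum_{k=0}^{n}\prod_{j=k+1}^{n}(1+\frac{2r}{t+j})\le C'(t+n)^{2r}\sum_{k=0}^{n}(t+k)^{-2r}$, and the three cases of \eqref{laB} fall out of the elementary fact that $\sum_{k=1}^{n}(t+k)^{-2r}$ is $O(1)$ for $2r>1$, $O(\log n)$ for $2r=1$, and $O(n^{1-2r})$ for $2r<1$.

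The only genuinely delicate step is the reduction in the second paragraph: the observation that averaging $\sum_{u}r^{d(u,p)}$ over the random parent $p$ closes the system in terms of $A_{n-1}$ and $B_{n-1}$ alone — the root contributes $a_{n-1}$ with weight $t$, while the remaining $n$ vertices of $\TT_{n-1}$ contribute $b_{n-1}$ in total, each with weight $1$. Once this is in place the problem is a standard P\'olya-type linear recursion, and what remains — the Gamma-function estimates, the three-way split for $B_n$, and keeping the constants uniform near $k=0$ and for small $n$ — is routine bookkeeping.
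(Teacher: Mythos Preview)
Your proof is correct and follows essentially the same approach as the paper: derive the linear recursions for $A_n$ and $B_n$ by conditioning on where $w_n$ attaches, solve them as sums involving Gamma-function ratios, and read off the bounds by elementary summation. The only cosmetic difference is that the paper bounds the inhomogeneous term in the $B_n$ recursion using the trivial estimate $A_{k-1}\le k$ rather than the just-proved $A_{n}\le Cn^r$, but both yield a uniformly bounded forcing term and hence the same final estimate.
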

Much more precise asymptotic formulas can be derived by the same method,
but we do not need them.
\begin{proof}
Recall that $w_n$ is the $(n+1)-$th coming vertex,
and assume that $w_n$ is attached to $w\in\TT_{n-1}$. Then, for all
$u\in\TT_{n-1}$,
\begin{align}
d(u, w_n) &= d(u,w)+1.
\end{align}
Hence,
\begin{align}
  A_{n} & =  A_{n-1}+ \bE r^{d(w_n)}\\
\nonumber & = A_{n-1}+ \frac{1}{n+t} \bE \left( \sum_{u \in \TT'_{n-1}}
            r^{d(u)+1}\right)
            +\frac{t}{n+t} r
  \\
  \nn &=\lrpar{1+\frac{r}{n+t}}A_{n-1}+\frac{tr}{n+t}. 
\end{align}
Consequently, by induction and using $A_0=r$,
\begin{align}\label{exactA}
  A_n& = r\sum_{k=0}^n\frac{t}{k+t}\prod_{j=k+1}^n\lrpar{1+\frac{r}{j+t}}
  =
  rt\sum_{k=0}^n\frac{\gG(n+1+t+r)}{\gG(n+1+t)}\frac{\gG(k+t)}{\gG(k+1+t+r)}
.
\end{align}
By standard asymptotics for the Gamma function (following from Stirling's
formula), this yields 
\begin{align}\label{notexactA}
  A_n&
       \le   rt\sum_{k=0}^n C \frac{(n+1)^r}{(k+1)^{r+1}}
         \le C(n+1)^r,
\end{align}
showing \eqref{laA}.

For \eqref{laB} we argue similarly. We have, 
\begin{align}
  \nonumber B_{n}
  & =  B_{n-1}+2 \bE \left( \sum_{u\in \TT'_{n-1}} r^{d(u,w_n)}\right)+1\\
\nonumber & = B_{n-1}+ \frac{2}{n+t} \bE \left( \sum_{u,w \in \TT'_{n-1}}
            r^{d(u,w)+1}\right)
            +\frac{2t}{n+t} \bE \left( \sum_{u\in \TT'_{n-1}} r^{d(u,o)+1}\right)+1
  \\
  \nn&=\lrpar{1+\frac{2r}{n+t}}B_{n-1}+\frac{2rt}{n+t}A_{n-1}+1            
\end{align}
and, with $A_{-1}:=0$,
\begin{align}\label{exactB}
  B_n& = \sum_{k=0}^n\lrpar{1+\frac{2rt}{k+t}A_{k-1}}
       \prod_{j=k+1}^n\lrpar{1+\frac{2r}{j+t}}
.
\end{align}
We use the crude estimate $A_{k-1}\le k$ and estimate the product in
\eqref{exactB} using
Gamma functions as in \eqref{exactA}--\eqref{notexactA}
(with $r$ replaced by $2r$); this yields
\begin{align}
  B_n& \le C \sum_{k=0}^n  \prod_{j=k+1}^n\lrpar{1+\frac{2r}{j+t}}
       \le C \sum_{k=0}^n  \frac{(n+1)^{2r}}{(k+1)^{2r}}
.
\end{align}
This implies \eqref{laB} by a simple summation.
\end{proof}

\subsection{Proof of the Theorem \ref{Thm:a.s.convg_irr}}
\label{SubSec:Proof-of-Theorem}

\begin{proof}

We observe that  the basic recursion ~\eqref{Equ:Fundamental-Recurssion} 
can also be written as
\begin{equation}
\label{recurssion}
U_{n+1}=U_{n} + \chi_{n+1} R 
\end{equation}
where $\chi_{n+1} = \left(\chi_{n+1,v}\right)_{v \in S}$ is such that 
$\chi_{n+1,Z_n}=1$ and $\chi_{n+1,u} = 0$ if $u \neq Z_n$. In other words,
\begin{align}
U_{n+1}=U_n + R_{Z_n}  
\end{align}
where $R_{Z_n}$ is the  $Z_n$-th row of the matrix $R$. 
Hence,
\begin{align}
 U_{n+1}& = U_0 + \displaystyle \sum_{k=1}^{n+1}\chi_k R,\\
  \frac{U_{n+1}-U_0}{n+t+1}&
  = \frac{1}{n+t+1} \displaystyle \sum_{k=1}^{n+1}\chi_k R.
  \label{UsumR}                           
\end{align}
To prove \eqref{Eq:a.s.convg_irr}, it is thus by
\eqref{UsumR},
and 
since $\frac{n+1}{n+t+1} \longrightarrow 1$  as  $n \to \infty$,
enough to show that
\begin{align}
  \frac{1}{n+1} \sum_{k=1}^{n+1}\chi_k R \to \pi
  \qquad \text{ in $\ell^1(S)$, a.s.} 
  \label{Ras}                           
\end{align}
Since $R$ is balanced, the mapping $x\mapsto xR$ is a bounded map
$\ell^1(S)\to\ell^1(S)$, and 
since furthermore $\pi R =\pi$,
to prove \eqref{Ras}
it is enough to show that as $ n \to \infty$,
\begin{equation}\label{SLLN}
  \frac{1}{n+1} \displaystyle \sum_{k=1}^{n+1}\chi_{k}\longrightarrow \pi
  \qquad \text{ in $\ell^1(S)$, a.s.} 
\end{equation} 
Both sides of \eqref{SLLN} can be regarded as probability distributions on
$S$, and therefore, the convergence in $\ell^1$ is equivalent to convergence
of every coordinate, i.e., to
\begin{equation}\label{Eq:SLLN}
  \frac{1}{n+1} \sum_{k=1}^{n+1}\chi_{k,v}\longrightarrow \pi_v
  \qquad\text{a.s.\ for every }v \in S.
\end{equation} 
Moreover, 
\eqref{Eq:local_time_color_almost_sure}
is just another way to write \eqref{Eq:SLLN}.
Hence, to show the theorem, it suffices to show \eqref{Eq:SLLN}.

Recall that $\chi_{k,v}=\bone_{\{Z_{k-1}=v\}}.$
From Theorem 3.3(a) of \cite{BaTh2016} (which easily is extended to general
$t$), it follows that
\begin{equation}\label{Eq:Expectation for random color}
\frac{1}{n+1}\bE \left[\displaystyle \sum_{k=1}^{n+1}\chi_{k,v}\right]= \frac{1}{n+1}\displaystyle \sum_{k=0}^{n}\bP\left(Z_k=v\right) \longrightarrow \pi_v \text{ as } n \to \infty.
\end{equation}

Note that
$
\left|\bone_{\{Z_k=v\}}-\bE \bone_{\{Z_k=v\}}\right|
\leq 1
$. 
Therefore, from the Strong Law of Large Numbers for correlated random
variables \cite[Theorem 1]{Ly88},
it follows that, if we prove 
\begin{equation}\label{Eq:Summability of var}
\displaystyle \sum_{n \geq 0} \frac{1}{n+1}\mathrm{Var} \left(\frac{1}{n+1}\displaystyle \sum_{k=0}^{n}\bone_{\{Z_k=v\}}\right)<
\infty,
\end{equation}
then, as $n \to \infty$,
\begin{align}
    \frac{1}{n+1} \sum_{k=1}^{n+1}\chi_{k,v}
=
\frac{1}{n+1}\displaystyle \sum_{k=0}^{n}\bone_{\{Z_k=v\}} \longrightarrow \pi_v \text{ a.s.},
\end{align} 
which will complete the proof.
In other words, if we define
\begin{equation}\label{Jnv}
J_{n,v}:=\mathrm{Var} \left( \sum_{k=0}^{n}\bone_{\{Z_k=v\}}\right),
\end{equation}
then, it suffices to show that
\begin{align}\label{abab}
  \sum_{n=1}^\infty \frac{1}{n^3}J_{n,v}<\infty.
\end{align}

Now, recalling \eqref{Eq:Grand_representation}, \eqref{Jnv} can be expanded
as
\begin{align}\label{Jnv2}
J_{n,v}&=\mathrm{Var} \left( \sum_{k=0}^{n}\bone_{\{W_k=v\}}\right)
= \sum_{u,w\in\TT'_n}\Cov \left(\bone_{\{W_u=v\}},\bone_{\{W_w=v\}}\right). 
\end{align}
 We use the conditional covariance formula to get
\begin{align}
\Cov \left(\bone_{\{W_u=v\}},\bone_{\{W_w=v\}}\right)
 &=  \bE\left[\Cov \left(\bone_{\{W_u=v\}},
                       \bone_{\{W_w=v\}} \mid \TT_{n}\right)\right] \nonumber \\
    & \qquad +\Cov \bigpar{\bE\left[\bone_{\{W_u=v\}}\mid \TT_{n}\right], \,
                       \bE\left[\bone_{\{W_w=v\}}\mid \TT_{n}\right]}
. \label{Eq:cond_cov}
\end{align}
Now, using Lemma \ref{Lem:Cov_2_colors}, we obtain 
\[
\Cov \left(\bone_{\{W_u=v\}}, \, \bone_{\{W_w=v\}} \mid \TT_{n} \right)
\le C \rho^{\frac{d(u,w)}{2}},
\]
where $d(u,w)$ 
denotes the graph distance between $u$ and $w$, and $C$ is a suitable
positive constant. Therefore, from \eqref{Jnv2}--\eqref{Eq:cond_cov},
the contribution to $J_{n,v}$
from the first part of
\eqref{Eq:cond_cov} is at most
\beq\label{Eq:sum_cov_part_1}
C\bE \left( \sum_{\substack{u,w \in \TT'_{n}}} \rho^{d(u,w)/2} \right)
=CB\bigpar{\rho^{1/2}},
\eeq
where we recall \eqref{laB=} and take  $r:=\rho^{1/2}$.


For the second part on the RHS of \eqref{Eq:cond_cov}, we have that, given
$\TT_n$, the distribution of $W_u$ is $(U_0/t)R^{d(u)}$. Hence,
\begin{align}
  \bE\lrpar{\bone_{\{W_u=v\}}\mid\TT_n}=(U_0/t)R^{d(u)}(v),
\end{align}
and thus it follows from the uniform ergodicity assumption
\eqref{Eq:GeoErforR} that 
\begin{align}
 \bigabs{\bE\lrpar{\bone_{\{W_u=v\}}\mid\TT_n}-\pi_v}\le C\rho^{d(u)}.
\end{align}
Consequently,
\begin{align}
\nn&\sum_{u,w\in\TT'_n}\Cov \bigpar{\bE\left[\bone_{\{W_u=v\}}\mid \TT_{n}\right], \,
                       \bE\left[\bone_{\{W_w=v\}}\mid \TT_{n}\right]}
         = \Var\lrpar{\sum_{u\in\TT'_n}\bE\left[\bone_{\{W_u=v\}}\mid \TT_{n}\right]}
     \\\nn&\hskip2em
  = \Var\lrpar{\sum_{u\in\TT'_n}\lrpar{\bE\left[\bone_{\{W_u=v\}}\mid
            \TT_{n}\right]-\pi_v}}
    \le   \bE\lrpar{\sum_{u\in\TT'_n}
          \lrpar{\bE\left[\bone_{\{W_u=v\}}\mid \TT_{n}\right]-\pi_v}}^2
  \\\nn&\hskip2em
         \le   \bE\lrpar{C\sum_{u\in\TT'_n}\rho^{d(u)}}^2
         =  C \bE \sum_{u,w\in\TT'_n}\rho^{d(u)+d(w)}
          \le  C \bE \sum_{u,w\in\TT'_n}\rho^{d(u,w)}
 \\\nn&\hskip2em
                   = C B_n(\rho),
\end{align}
where we use the fact that $d(u)+d(w) \ge d(u,w)$ and $0< \rho<1$ to obtain
the last inequality.
Hence,
the contribution to $J_{n,v}$
from the second part of
\eqref{Eq:cond_cov} is at most $CB_n(\rho)$.

Combining the contributions from the two parts
of \eqref{Eq:cond_cov},
we thus have shown that, recalling $0<\rho<1$, 
\begin{align}
  J_{n,v}\le C B_n\bigpar{\rhoqq} + CB_n(\rho)
  \le C B_n\bigpar{\rhoqq}.
\end{align}
Hence, 
we can use Lemma \ref{LA} and conclude
\eqref{abab},
which completes the proof.
\end{proof}

\section{Random walk with linear reinforcement on the star graph}
\label{Sec:Appl}
In this section, we consider a linearly reinforced random walk model on the 
infinite (countable) \emph{star graph}.
We will show that the almost sure convergence for the local times for this walk can be derived using our main result stated in Section ~\ref{SubSec:Result}.

Let us consider a special type of vertex-reinforced nearest neighbor random
walk $(X_n)_{n \geq 0}$ on an infinite star graph, with a loop at the
root. We denote the root by $v_0$ and the other vertices by $v_i, i \ge 1$.
Each edge is regarded as a pair of directed edges in opposite directions;
the notation $(v_i,v_j)$ indicates that the
edge is from $v_i$ to $v_j$.
We impose the following condition on the walk
that $v_0$ is a special vertex, in the sense that, whenever the walker takes
the edge $(v_j,v_0)$, for any $j$, it puts an additional weight of
$\alpha_j:=(\alpha_{j,i})_{i \ge 0}$ on  the vertices, such that,
$\sum_{i}\alpha_{j,i}< \infty$. If the edge taken is $(v_0, v_j)$, $j \neq
0$ then no vertex is reinforced.

Initially, $X_0 \equiv v_0$, the walker is at the root, and jumps to one of
the adjacent neighbors with probability proportional to the given weights
$\de_i$, such that, $\de:=\sum_{i \ge 0}\de_i < \infty.$ At any time $n\ge
1$, the transition probabilities for the random walk is governed by 
\beqna\label{Eq:Trans_VRRW}
\bP \left(X_{n+1}=v_j | X_n=v_i\right)= \begin{cases}
\frac{\De_{n,j}}{\sum_{k}\De_{n,k}}, \text{ when } i=0,\\
\bone_{\{j=0\}}, \text{ for } i \ge 1.
\end{cases}
\eeqna
where $\De_{n,j}$ denotes the weight at the vertex $v_j$ at time $n$.

Observe that, if we denote by $\sig_k$, the random time at which the weights are updated for the $k$-th time, then $\sig_{k+1}= \sig_k+Y_{k+1}$, where $Y_{k+1} \in \{1,2\}$ is a random variable, such that, 
$$
\bP\left(Y_{k+1}=1| \De_0, \De_{\sig_1}, \ldots \De_{\sig_k}\right) = \frac{\De_{\sig_k,0}}{\sum_j \De_{\sig_k,j}}.
$$
Therefore, the weight sequence at these updating random times can be coupled with an infinite color urn model, as described below. 
 
Consider the urn model with colors indexed by $S:=\{0,1,2, \ldots\}$, and an
initial composition $U_0=(\de_i)_{ i \ge 0}$. The replacement matrix is such
that the $j$-th row of the matrix is $\alpha_j$.
Since the graph
is a star graph, for the random walk  to take a step along $(v_i,v_0)$, $i
\neq 0$, it implies that the walker has jumped along the edge $(v_0, v_i)$
according to the transition probabilities given by
\eqref{Eq:Trans_VRRW}. So, if we consider the sequence of weights at time
$\sig_1, \sig_2, \ldots$, then the processes are coupled such that
\beq\label{Eq:law_walk_urn}
\left(\Delta_{\sig_n}\right)_{ n \ge 0} = \left(U_n\right)_{n \ge 0}.
\eeq

In particular, if $\sum_{i} \alpha_{j,i}=1$ for each $j$, then the
replacement matrix is a stochastic matrix. Henceforth, we assume that
$\alpha_j$ is a probability vector for every $j \ge 0.$ We also assume that
$\alpha_j$ are such that, the Markov chain corresponding to the replacement
matrix is irreducible, aperiodic and uniformly ergodic. 

A particular example of such a matrix is when $\alpha_0=(p_j)_{j \ge 0}$,
with $p_j>0$ and $\sum_j p_j=1$, 
and, for $j \neq 0$,
$\alpha_{j,i}=1 \text{ if } i =0$, and $0$ otherwise.
(Our conditions, including uniform ergodicity, are easily verified.)

\begin{theorem}\label{Thm:cgs_distribution_update_times_VRRW_star}
Let $X_n$ be a vertex reinforced random walk on an infinite star graph with
a loop at the root, such that the
replacement matrix is an irreducible, aperiodic and uniformly ergodic
stochastic matrix.
Let the transition probabilities of $X_n$ be as in
\eqref{Eq:Trans_VRRW}. If we denote by $\sig_n$, the $n$-th update time,
then as $n \to \infty$,
\beq \label{Eq:cgs_distribution_update_time}
\frac{\sig_n}{n+1} \longrightarrow 2- \pi_0, \text{ a.s.\ and in } L_1.
\eeq
Furthermore, for any $j \ge 0$, as $n \to \infty$
\beq\label{Eq:cgs_distribution_weights}
\frac{\De_{n,j}}{n+\de} \longrightarrow \frac{\pi_j}{2-\pi_0}, \text{  a.s.,}
\eeq
where $\pi$ is the stationary distribution of the coupled urn process as
defined in \eqref{Eq:law_walk_urn}.
\end{theorem}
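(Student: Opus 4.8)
The plan is to derive Theorem~\ref{Thm:cgs_distribution_update_times_VRRW_star} from Theorem~\ref{Thm:a.s.convg_irr} via the coupling \eqref{Eq:law_walk_urn}. First I would apply Theorem~\ref{Thm:a.s.convg_irr}\ref{convg_irr} to the coupled urn process $(U_n)_{n\ge0}$ with replacement matrix given by the rows $\alpha_j$ and initial configuration $U_0=(\de_i)_{i\ge0}$ of total mass $\de$; this yields, coordinatewise and in $\ell_1$,
\begin{equation}
\frac{U_n}{n+\de}\longrightarrow \pi\qquad\text{a.s.}
\end{equation}
Transporting this through \eqref{Eq:law_walk_urn} gives $\De_{\sig_n,j}/(n+\de)\to\pi_j$ a.s.\ for every $j$, and also $\sum_j\De_{\sig_n,j}/(n+\de)\to 1$ a.s.\ (total mass of $U_n$ is exactly $n+\de$). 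Simultaneously, Theorem~\ref{Thm:a.s.convg_irr}\ref{local_times_color} applied to color $0$ gives $N_{n,0}/(n+1)\to\pi_0$ a.s., where $N_{n,0}=\sum_{k=0}^n\bone_{\{Z_k=0\}}$ counts how many of the first $n+1$ updates were of ``type $0$'', i.e.\ how many $Y$'s equal $1$.

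Next I would translate the count $N_{n,0}$ into $\sig_n$. By construction $\sig_n=\sum_{k=1}^n Y_k$ with $Y_k\in\{1,2\}$, and $Y_k=1$ precisely on the event corresponding to $Z_{k-1}=0$. Hence
\begin{equation}
\sig_n=\sum_{k=1}^n Y_k = \#\{k\le n: Y_k=1\}+2\,\#\{k\le n: Y_k=2\} = 2n - \#\{k\le n: Y_k=1\},
\end{equation}
so $\sig_n = 2n - N_{n-1,0}$. Dividing by $n+1$ and using $N_{n-1,0}/(n+1)\to\pi_0$ a.s.\ together with $2n/(n+1)\to2$ yields \eqref{Eq:cgs_distribution_update_time}. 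For the $L_1$ claim I would note that $0\le \sig_n/(n+1)\le 2n/(n+1)\le2$, so the sequence is uniformly bounded and a.s.\ convergence upgrades to $L_1$ convergence by dominated convergence (this also follows from $\bE N_{n-1,0}/(n+1)\to\pi_0$ via \eqref{Eq:Irr_aperiodic_expectation_convgs_urn}).

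For \eqref{Eq:cgs_distribution_weights} the subtlety is that $\De_{n,j}$ is the weight at the walk's time $n$, not at an update time $\sig_k$; between consecutive updates the weight vector is frozen. Given $n$, let $k=k(n)$ be the number of updates completed by walk-time $n$, i.e.\ $\sig_{k}\le n<\sig_{k+1}$; then $\De_{n,j}=\De_{\sig_k,j}=U_{k,j}$. From $\sig_n/(n+1)\to 2-\pi_0>0$ a.s.\ one gets $\sig_k\to\infty$ and, inverting, $k(n)/n\to 1/(2-\pi_0)$ a.s.\ (a standard renewal-type inversion: $\sig_{k(n)}\le n<\sig_{k(n)+1}$ and $\sig_k/k\to 2-\pi_0$). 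Therefore
\begin{equation}
\frac{\De_{n,j}}{n+\de}=\frac{U_{k(n),j}}{k(n)+\de}\cdot\frac{k(n)+\de}{n+\de}\longrightarrow \pi_j\cdot\frac{1}{2-\pi_0}\qquad\text{a.s.},
\end{equation}
which is \eqref{Eq:cgs_distribution_weights}. The main obstacle is precisely this last bookkeeping step — making the inversion $n\mapsto k(n)$ rigorous and checking that $k(n)\to\infty$ so that Theorem~\ref{Thm:a.s.convg_irr} applies along the subsequence $k(n)$ — but this is routine once one observes $\sig_k$ is strictly increasing with a.s.\ linear growth. Everything else is a direct invocation of Theorem~\ref{Thm:a.s.convg_irr} through the coupling \eqref{Eq:law_walk_urn}.
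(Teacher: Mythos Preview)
Your proposal is correct and follows essentially the same route as the paper: both express $\sig_n$ as $2n$ minus the count of updates with $Y_k=1$, identify that count with $N_{\cdot,0}$ via the coupling, invoke Theorem~\ref{Thm:a.s.convg_irr}\ref{local_times_color} and dominated convergence for \eqref{Eq:cgs_distribution_update_time}, and then invert $n\mapsto k(n)$ (the paper calls it $m(n)$) to pull \eqref{Eq:cgs_distribution_weights} from Theorem~\ref{Thm:a.s.convg_irr}\ref{convg_irr}. Your indexing $\sig_n=2n-N_{n-1,0}$ and your bound $\sig_n/(n+1)\le 2$ are in fact slightly cleaner than the paper's corresponding lines, but the argument is the same.
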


\begin{proof}
As observed earlier $\sig_{k+1}=\sig_k+Y_{k+1}$, where $Y_{k+1} \in \{1,2\}$, and 
$$
\bP\left(Y_{k+1}=1| \De_0, \De_{\sig_1}, \ldots \De_{\sig_k}\right) = \frac{\De_{\sig_k,0}}{\sum_j \De_{\sig_k,j}}.
$$
Let us denote by $\wtilde{\sig}_k:= \sum_{i=0}^k \bone_{\{Y_j=1\}}.$ Then
from the conditional distribution of $Y_k$ above and from
\eqref{Eq:law_walk_urn}, we have, using the coupling above,
\beq \label{Eq:law_update_by_one}
\wtilde{\sig}_n = \sum_{k=0}^n \bone_{\{Z_k=0\}}=N_{n,0},
\eeq
 where $Z_k$ denotes the random color of the ball selected in the coupled
 urn model.
 From Theorem  \ref{Thm:a.s.convg_irr}\ref{local_times_color},
 we know that as $n \to \infty$, 
 $N_{n,0}/(n+1) \longrightarrow \pi_0$
 a.s. Thus, as $n \to \infty$, 
\beq \label{Eq:convergence_distribution_update_by_one}
\frac{\wtilde{\sig}_n}{n+1} \longrightarrow \pi_0, \text{ a.s.} 
\eeq
Since $\sig_n= \wtilde{\sig}_n+ 2(n-\wtilde{\sig}_n)$, so
\eqref{Eq:cgs_distribution_update_time} follows immediately.
Since $0\le \frac{\sig_n}{n+1}\le 1$, the $L_1$ convergence in \eqref{Eq:cgs_distribution_update_time} follows by dominated convergence theorem. 

Denote by $m(n):=\sup\{k: \sig_k \le n\}$.
Then it follows from
\eqref{Eq:cgs_distribution_update_time} that,
as $n \to \infty$,
$\frac{m(n)}{n+1} \longrightarrow \frac{1}{2- \pi_0}$, a.s.
From
\eqref{Eq:law_walk_urn} and
Theorem \ref{Thm:a.s.convg_irr}\ref{convg_irr}, we have as $n \to\infty$,
\begin{align}\label{Eq:Convergence_weight_sequence}
  \frac{\De_{n,j}}{n+\de}
  = \frac{\De_{\sigma_{m(n)},j}}{m(n)} \frac{m(n)}{n+\de}
    = \frac{U_{m(n),j}}{m(n)} \frac{m(n)}{n+\de}
  \longrightarrow \frac{\pi_j}{2-\pi_0}, \quad\text{a.s.}
\end{align}
\end{proof}

\section*{Acknowledgement}
The authors are grateful to Colin Desmarais and Cecilia Holmgren for various discussions they had with them over the course of working on this paper. We are grateful for their insightful ideas and remarks, which helped us improve the quality of this paper.

\bibliographystyle{plain}

\bibliography{RT}

\begin{thebibliography}{10}

\bibitem{AthKar68}
Krishna~B. Athreya and Samuel Karlin.
\newblock Embedding of urn schemes into continuous time {M}arkov branching
  processes and related limit theorems.
\newblock {\em Ann. Math. Statist.}, 39:1801--1817, 1968.

\bibitem{BaiHu05}
Zhi-Dong Bai and Feifang Hu.
\newblock Asymptotics in randomized urn models.
\newblock {\em Ann. Appl. Probab.}, 15(1B):914--940, 2005.

\bibitem{BaTH2014}
Antar Bandyopadhyay and Debleena Thacker.
\newblock Rate of convergence and large deviation for the infinite color
  {P}\'{o}lya urn schemes.
\newblock {\em Statist. Probab. Lett.}, 92:232--240, 2014.

\bibitem{BaTh2016}
Antar Bandyopadhyay and Debleena Thacker.
\newblock A new approach to p{\'o}lya urn schemes and its infinite color
  generalization.
\newblock (https://arxiv.org/abs/1606.05317), 2016.

\bibitem{BaTh2017}
Antar Bandyopadhyay and Debleena Thacker.
\newblock P\'{o}lya urn schemes with infinitely many colors.
\newblock {\em Bernoulli}, 23(4B):3243--3267, 2017.

\bibitem{BlackMac73}
David Blackwell and James~B. MacQueen.
\newblock Ferguson distributions via {P}\'olya urn schemes.
\newblock {\em Ann. Statist.}, 1:353--355, 1973.

\bibitem{maulik1}
Arup Bose, Amites Dasgupta, and Krishanu Maulik.
\newblock Multicolor urn models with with reducible replacement matrices.
\newblock {\em Bernoulli}, 15(1):279--295, 2009.

\bibitem{maulik2}
Arup Bose, Amites Dasgupta, and Krishanu Maulik.
\newblock Strong laws for balanced triangular urns.
\newblock {\em J. Appl. Probab.}, 46(2):571--584, 2009.

\bibitem{ChHsYa14}
May-Ru Chen, Shoou-Ren Hsiau, and Ting-Hsin Yang.
\newblock A new two-urn model.
\newblock {\em J. Appl. Probab.}, 51(2):590--597, 2014.

\bibitem{ChKu13}
May-Ru Chen and Markus Kuba.
\newblock On generalized {P}\'olya urn models.
\newblock {\em J. Appl. Probab.}, 50(4):1169--1186, 2013.

\bibitem{DasMau11}
Amites Dasgupta and Krishanu Maulik.
\newblock Strong laws for urn models with balanced replacement matrices.
\newblock {\em Electron. J. Probab.}, 16:no. 63, 1723--1749, 2011.

\bibitem{Drm_2009}
Michael Drmota.
\newblock {\em Random trees}.
\newblock SpringerWienNewYork, Vienna, 2009.
\newblock An interplay between combinatorics and probability.

\bibitem{FellerVol1}
William Feller.
\newblock {\em An introduction to probability theory and its applications.
  {V}ol. {I}}.
\newblock Third edition. John Wiley \& Sons, Inc., New York-London-Sydney,
  1968.

\bibitem{FlDuPu06}
Philippe Flajolet, Philippe Dumas, and Vincent Puyhaubert.
\newblock Some exactly solvable models of urn process theory.
\newblock In {\em Fourth {C}olloquium on {M}athematics and {C}omputer {S}cience
  {A}lgorithms, {T}rees, {C}ombinatorics and {P}robabilities}, Discrete Math.
  Theor. Comput. Sci. Proc., AG, pages 59--118. Assoc. Discrete Math. Theor.
  Comput. Sci., Nancy, 2006.

\bibitem{Gouet}
Ra{\'u}l Gouet.
\newblock Strong convergence of proportions in a multicolor {P}\'olya urn.
\newblock {\em J. Appl. Probab.}, 34(2):426--435, 1997.

\bibitem{Haggs2005}
Olle H\"{a}ggstr\"{o}m.
\newblock On the central limit theorem for geometrically ergodic {M}arkov
  chains.
\newblock {\em Probab. Theory Related Fields}, 132(1):74--82, 2005.

\bibitem{Svante1}
Svante Janson.
\newblock Functional limit theorems for multitype branching processes and
  generalized {P}\'olya urns.
\newblock {\em Stochastic Process. Appl.}, 110(2):177--245, 2004.

\bibitem{Svante2}
Svante Janson.
\newblock Limit theorems for triangular urn schemes.
\newblock {\em Probab. Theory Related Fields}, 134(3):417--452, 2006.

\bibitem{Sv_2017}
Svante Janson.
\newblock Random replacements in {P}{\'o}lya urns with infinitely many colours.
\newblock (https://arxiv.org/abs/1711.09830), 2017.

\bibitem{Sv_2018}
Svante Janson.
\newblock A.s. convergence for infinite colour {P}{\'o}lya urns associated with
  random walks.
\newblock (https://arxiv.org/abs/1803.04207), 2018.

\bibitem{LaPa2013}
Sophie Laruelle and Gilles Pag{\`e}s.
\newblock Randomized urn models revisited using stochastic approximation.
\newblock {\em Ann. Appl. Probab.}, 23(4):1409--1436, 2013.

\bibitem{LevPeEli2009}
David~A. Levin, Yuval Peres, and Elizabeth~L. Wilmer.
\newblock {\em Markov chains and mixing times}.
\newblock American Mathematical Society, Providence, RI, 2009.
\newblock With a chapter by James G. Propp and David B. Wilson.

\bibitem{Ly88}
Russell Lyons.
\newblock Strong laws of large numbers for weakly correlated random variables.
\newblock {\em Michigan Math. J.}, 35(3):353--359, 1988.

\bibitem{Mai_Mar_2017}
C\'{e}cile Mailler and Jean-Fran\c{c}ois Marckert.
\newblock Measure-valued {P}\'{o}lya urn processes.
\newblock {\em Electron. J. Probab.}, 22:Paper No. 26, 33, 2017.

\bibitem{Me_Tw_2009}
Sean Meyn and Richard~L. Tweedie.
\newblock {\em Markov chains and stochastic stability}.
\newblock Cambridge University Press, Cambridge, second edition, 2009.
\newblock With a prologue by Peter W. Glynn.

\bibitem{Pe07}
Robin Pemantle.
\newblock A survey of random processes with reinforcement.
\newblock {\em Probab. Surv.}, 4:1--79, 2007.

\bibitem{Polya30}
G.~P{\'o}lya.
\newblock Sur quelques points de la th\'eorie des probabilit\'es.
\newblock {\em Ann. Inst. H. Poincar\'e}, 1(2):117--161, 1930.

\bibitem{Sene06}
Eugene Seneta.
\newblock {\em Non-negative matrices and {M}arkov chains}.
\newblock Springer Series in Statistics. Springer, New York, 2006.
\newblock Revised reprint of the second (1981) edition [Springer-Verlag, New
  York; MR0719544].

\bibitem{Th-Thesis}
Debleena Thacker.
\newblock {\em Infinite Color Urn Models}.
\newblock 2015.
\newblock Thesis (Ph.D.)--Indian Statistical Institute, Delhi Centre.

\end{thebibliography}

\end{document}